\newtheorem{theorem}{Theorem}[section]
\newtheorem{lemma}[theorem]{Lemma}
\newtheorem{corollary}[theorem]{Corollary}
\theoremstyle{definition}
\def\R{\mathbb{R} } 
\def\Z{\mathbb{Z} } 
\def\nbd{neighborhood } 
\def\R{\mathbb{R} }
\begin{document}

\title[$R$-closed homeomorphisms on surfaces]
{$R$-closed homeomorphisms on surfaces}
\author{Tomoo Yokoyama}
\date{\today}
\email{yokoyama@math.sci.hokudai.ac.jp}

\thanks{The author is partially supported 
by the JST CREST Program at Department of Mathematics,  
Hokkaido University.}

\maketitle

\begin{abstract}
Let $f$ be an $R$-closed homeomorphism on 
a connected orientable closed surface $M$. 
In this paper, 
we show that 
if $M$ has genus more than one,  
then each minimal set 
is either a periodic orbit 
or an extension of a Cantor set. 
If $M = \mathbb{T}^2$ and 
$f$ is neither minimal nor periodic, 
then either 
each minimal set is 
a finite disjoint union of essential circloids 
or  
there is a minimal set which is  
an extension of a Cantor set. 
If $M = \mathbb{S}^2$ and $f$ is not periodic 
but orientation-preserving (resp. reversing), 
then the minimal sets of $f$ (resp. $f^2$) are 
exactly two fixed points and 
other circloids  
and $\mathbb{S}^2/\widetilde{f} \cong [0, 1]$. 
\end{abstract}

\section{Introduction}
In \cite{Ma}, 
it has shown that 
if
$f$ is orientation-preserving $R$-closed and non-periodic 
homeomorphism on $\mathbb{S}^2$, 
then $f$ has exactly two fixed points and every non-degenerate orbit
closure is a homology $1$-sphere. 
In this paper, 
we consider minimal sets of $R$-closed 
homeomorphisms on closed surfaces. 
Precisely, 
let $f$ be an $R$-closed homeomorphism on 
a connected orientable closed surface $M$. 
Then we show that 
if $M$ has genus more than one,  
then each minimal set 
is either a periodic orbit 
or an extension of a Cantor set. 
If $M = \mathbb{T}^2$ and 
$f$ is neither minimal nor periodic, 
then either 
the orbit class space $\mathbb{T}^2/\widetilde{f}$ 
is a $1$-manifold 
and 
each minimal set is 
a finite disjoint union of essential circloids, 
or  
there is a minimal set which is  
an extension of a Cantor set. 
If $M = \mathbb{S}^2$ and $f$ is not periodic 
but orientation-preserving (resp. reversing), 
then the minimal sets of $f$ (resp. $f^2$) are 
exactly two fixed points and 
other circloids  
and $\mathbb{S}^2/\widetilde{f} \cong [0, 1]$. 
Finally we state the applications for codimension two foliations.

\section{Preliminaries}

By a flow, we mean a continuous action of a topological group $G$ 
on a topological space $X$. 
We call that 
$G$ is $R$-closed if $R := \{ (x, y) \mid y \in \overline{G(x)} \}$ is closed. 
%
%
Recall that a subset $S$ of $G$
is is said to be (left) syndetic if there is a compact set $K$ of $G$ with $KS = G$. 
For a point $x \in  X$ 
and an open $U$ of $X$,  
let $N(x, U) = \{ g \in G \mid  gx \in  U\}$.  
We say that 
$x$ is an almost periodic point if 
$N(x, U)$ is syndetic for every neighborhood $U$ of $x$. 
A flow $G$ is pointwise almost periodic if 
every point $x \in X$ is almost periodic. 
When $X$ is  a compact metrizable (i.e. compact Hausdorff) space, 
they are known that  
if $f$ is $R$-closed, 
then 
$f$ is pointwise almost periodic,  
and that  
$f$ is pointwise almost periodic 
if and only if 
$\{ \overline{G(x)} \mid x \in X \}$ is a decomposition of $X$. 
%
%
%
When $f$ is pointwise almost periodic, 
write 
$\hat{\mathcal{F}} := \{  \overline{G(x)} \mid x \in X \}$ 
the decomposition of $X$. 
Note that 
$X/\hat{\mathcal{F}}$ 
is called an orbit class space 
and is also denoted by $X/ \widetilde{G}$.  
%
A pointwise almost periodic flow $G$ is 
weakly almost periodic in the sense of Gottschalk \cite{G} 
if 
the saturation of orbit closures for any closed subset of $X$ is 
closed (i.e. the quotient map $X \to X/\widetilde{G}$ is closed). 
By Theorem 5 \cite{Ma} and Proposition 1.2 \cite{Y}, 
the following are equivalent for 
a pointwise almost periodic flow $f$ on a compact metrizable space: 
1) $\hat{\mathcal{F}}$ is $R$-closed, 
2) $\hat{\mathcal{F}}$ is weakly almost periodic, 
3) $\hat{\mathcal{F}}$ is upper semi-continuous,  
4) $X/\hat{\mathcal{F}}$ is Hausdorff.

By a continuum we mean a compact connected metrizable space which 
is not a singleton. 
A continuum $A \subset X$ is said to be annular 
if it has a neighbourhood $U \subset X$
homeomorphic to an open annulus 
such that $U - A$ has exactly two components each 
of which is homeomorphic
to an annulus. 
We call any such $U$ an annular neighbourhood of $A$.
We say a subset $C \subset X$ is a circloid 
if it is an annular continuum 
and 
does not contain any strictly smaller annular continuum as a subset.
For 
a subset $A$ of $X$ and 
a decomposition $\hat{\mathcal{F}}$, 
the saturation $\mathrm{Sat}(A)$ of $A$ is 
the union 
$\cup \{ L \in \hat{\mathcal{F}} \mid A \cap L \neq \emptyset \}$ of 
elements of $\hat{\mathcal{F}}$ intersecting $A$.

\begin{lemma}\label{lem:01}
Let $X$ be a sequentially compact space and 
$(C_n)$ a sequence of connected subsets of $X$. 
Suppose that 
there are disjoint open subsets $U, V$ of $X$ and 
sequences $(x_n)$ (resp. $(y_n)$) converging to $x \in U$ (resp. $y \in V$) 
with $x_n, y_n \in C_n$. 
Then there is an element $z \in ( \cap_{n>0} \overline{\cup_{k >n}C_k}) \setminus U \sqcup V$. 
\end{lemma}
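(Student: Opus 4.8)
The plan is to recognize the target set $\bigcap_{n>0}\overline{\bigcup_{k>n}C_k}$ as the topological limit superior of the sequence $(C_n)$: a point $z$ lies in it precisely when every neighborhood of $z$ meets $C_k$ for infinitely many $k$, equivalently when $z$ is a limit of a sequence $z_{n_k}\in C_{n_k}$ with $n_k\to\infty$. So the whole problem reduces to manufacturing a single sequence $z_n\in C_n$ that avoids $U\cup V$ and then extracting a convergent subsequence, whose limit will automatically both lie in the intersection and stay outside $U\sqcup V$.

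First I would exploit connectedness to locate the points $z_n$. Since $x_n\to x\in U$ and $y_n\to y\in V$ with $U,V$ open, there is an $N$ such that for all $n\geq N$ one has $x_n\in U$ and $y_n\in V$. For such $n$ the relatively open sets $C_n\cap U$ and $C_n\cap V$ are both nonempty and disjoint (as $U\cap V=\emptyset$), so if $C_n$ were contained in $U\cup V$ these two sets would disconnect it, contradicting connectedness of $C_n$. Hence $C_n\not\subseteq U\cup V$, and I may choose $z_n\in C_n\setminus(U\cup V)$ for every $n\geq N$.

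Finally I would apply sequential compactness of $X$ to the sequence $(z_n)_{n\geq N}$ to obtain a subsequence $z_{n_k}\to z$ with $n_k\to\infty$. Because each $z_{n_k}$ lies in the closed set $X\setminus(U\cup V)$, the limit satisfies $z\notin U\cup V=U\sqcup V$. Moreover, for any fixed $m$, all but finitely many indices satisfy $n_k>m$, so the tail of the subsequence lies in $\bigcup_{j>m}C_j$ and thus $z\in\overline{\bigcup_{j>m}C_j}$; as $m$ was arbitrary, $z\in\bigcap_{n>0}\overline{\bigcup_{k>n}C_k}$, which completes the argument.

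There is no serious obstacle here; the proof is essentially bookkeeping. The only points demanding care are the initial truncation (discarding the finitely many $n<N$ for which $x_n\in U$ or $y_n\in V$ may fail) and the observation that sequential compactness—rather than full compactness—is exactly the hypothesis needed to extract the limit $z$ from $(z_n)$ without any metrizability or first-countability assumption on $X$.
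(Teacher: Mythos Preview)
Your argument is correct and follows exactly the paper's route: set $F=X\setminus(U\sqcup V)$, use connectedness of $C_n$ to pick $z_n\in C_n\cap F$, and extract a convergent subsequence by sequential compactness. If anything, you are more careful than the paper, which asserts ``each $C_n$ intersects $F$'' without noting the truncation at $n\geq N$ and leaves the verification that the limit lies in $\bigcap_{n>0}\overline{\bigcup_{k>n}C_k}$ implicit.
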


\begin{proof} 
Let $F = X - U \sqcup V$ be a closed subset. 
Since $C_n$ is connected, 
each $C_n$ intersects $F$. 
Choose $z_n \in C_n \cap F$. 
Since $X$ is sequentially compact, 
we have 
$F$ is also sequentially compact. 
Hence there is a convergent subsequence of $z_n$ and 
so the limit $z \in F$ is desired.  
\end{proof} 

We show that 
connected closures  
for an $R$-closed flow 
must converge to a connected closure. 

\begin{lemma}\label{lem:02}
Let $G$ be an $R$-closed flow  on 
a sequentially compact space $X$ and 
let $(w_n)$ be 
a convergent sequence to a point $w \in M$. 
If 
each $\overline{G(w_n)}$ is connected, 
then the closure $\overline{G(w)}$ 
is connected. 
\end{lemma}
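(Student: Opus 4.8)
The plan is to argue by contradiction, supposing $\overline{G(w)}$ is disconnected and then contradicting the hypothesis that $R = \{(x,y) \mid y \in \overline{G(x)}\}$ is closed. So suppose $\overline{G(w)} = A \sqcup B$ with $A, B$ nonempty, disjoint, and closed. Since $A$ and $B$ are disjoint compact sets in the Hausdorff space $X$, choose disjoint open sets $U \supseteq A$ and $V \supseteq B$. As $w \in \overline{G(w)}$, we may assume $w \in U$, and we fix a point $w' \in B \subseteq V$.

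First I would manufacture, for each $n$, a point $y_n$ in the orbit $G(w_n) \subseteq \overline{G(w_n)}$ with $y_n \to w'$. Since $w' \in \overline{G(w)}$, there are group elements $g_k \in G$ with $g_k w \to w'$; fixing a metric $d$ on $X$, pick $g_k$ so that $d(g_k w, w') < 1/k$. For each fixed $k$, continuity of the action of the single element $g_k$ together with $w_n \to w$ yields an index $N_k$, which I arrange to be strictly increasing in $k$, such that $d(g_k w_n, g_k w) < 1/k$ for all $n \ge N_k$, whence $d(g_k w_n, w') < 2/k$. A diagonal choice — for each $n$ let $k(n)$ be the largest $k$ with $N_k \le n$ and set $y_n := g_{k(n)} w_n$ — then produces $y_n \in G(w_n)$ with $y_n \to w'$.

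With this in hand, I would apply Lemma \ref{lem:01} to the sequence $C_n := \overline{G(w_n)}$ of connected sets, using $x_n := w_n \in C_n$ with $x_n \to w \in U$ and $y_n \in C_n$ with $y_n \to w' \in V$. The lemma supplies a point $z \in \big(\bigcap_{n>0}\overline{\bigcup_{k>n} \overline{G(w_k)}}\big) \setminus (U \sqcup V)$; in particular $z \notin U \sqcup V$. Unwinding the membership of $z$ in $\bigcap_{n}\overline{\bigcup_{k>n}\overline{G(w_k)}}$, I can extract indices $k_j \to \infty$ and points $z_j \in \overline{G(w_{k_j})}$ with $z_j \to z$.

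Finally I would invoke the $R$-closedness. Each pair $(w_{k_j}, z_j)$ lies in $R$, and $(w_{k_j}, z_j) \to (w, z)$ since $w_{k_j} \to w$; as $R$ is closed we obtain $(w,z) \in R$, i.e. $z \in \overline{G(w)} = A \sqcup B \subseteq U \sqcup V$, contradicting $z \notin U \sqcup V$. The main obstacle is exactly the construction of the points $y_n \to w'$: connectedness of the individual orbit closures enters only through Lemma \ref{lem:01}, so the whole argument hinges on feeding that lemma a second sequence landing in $V$, and the only available mechanism is the diagonal approximation of the limit point $w'$ by the images $g_k w_n$ of the nearby points under fixed group elements.
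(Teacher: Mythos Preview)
Your proof is correct and follows essentially the same route as the paper's: separate the putatively disconnected orbit closure by two disjoint open sets, use continuity of the action to produce sequences in the $C_n$ landing in each, invoke Lemma~\ref{lem:01} to find a point $z$ outside both, and then contradict $R$-closedness via a convergent sequence of pairs in $R$. The only difference is that you spell out the diagonal construction of the $y_n$ explicitly (the paper compresses this into the phrase ``continuity of $G$'') and introduce a metric to do so --- strictly speaking the hypotheses only give sequential compactness, but the diagonal step works just as well with a countable neighborhood base, and in any case the paper's ambient setting is compact metrizable.
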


\begin{proof} 
Put $C_n := \overline{G(w_n)}$. 
%
Suppose that  
$\overline{G(w)}$ is disconnected. 
Then there are disjoint open subsets $U, W$ of $M$ 
such that 
$\overline{G(w)} \subseteq U \sqcup W$, 
$\overline{G(w)} \cap U \neq \emptyset$, 
and 
$\overline{G(w)} \cap W \neq \emptyset$. 
Then 
$G(w) \cap U \neq \emptyset$ 
and 
$G(w) \cap W \neq \emptyset$. 
Since $w_n$ converges to $w$, 
the continuity of $G$ implies that 
there are sequences $(x_n)$ (resp. $(y_n)$) 
converging $x \in U$ (resp. $y \in W$) 
such that $x_n, y_n \in C_n$.
%
By Lemma \ref{lem:01}, 
there is an element 
$z \in ( \cap_{n>0} \overline{\cup_{k >n}C_k}) \setminus U \sqcup W$.  
Then 
there is a convergent sequence 
$(z_n \in C_n)$ to $z$. 
Since 
$z_n \in C_n = \overline{G(w_n)}$
and 
$z \notin \overline{G(w)}$,  
we obtain 
$( w_n, z_n) \in R$ and 
$(w, z) \notin R$.  
This contradicts the $R$-closedness. 
Therefore 
$C$ is connected. 
\end{proof}

Let $f$ be a pointwise almost periodic homeomorphism on 
an orientable connected closed surface $M$. 
Recall $\hat{\mathcal{F}}  = \hat{\mathcal{F}}_f 
= \{ \overline{O_f(x)} \mid x \in M \}$. 
Write 
$V = V_f := \{ x \in M - \mathrm{Fix} (f) \mid \overline{O(x)} \text{ is connected } \} 
= \cup \{ L \in \hat{\mathcal{F}} : \text{ connected } \} - \mathrm{Fix} (f)$. 

\begin{lemma}\label{lem:0125}
If $f$ is not minimal, 
then $V$ consists of circloids.
\end{lemma}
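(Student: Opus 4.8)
The plan is to fix a point $x \in V$ and show that its orbit closure $C := \overline{O(x)}$ is a circloid. First I would record the basic reductions. Since $f$ is pointwise almost periodic, $C$ is a minimal set, and because $x \notin \mathrm{Fix}(f)$ it cannot be a single point (a connected finite orbit would be a fixed point), so $C$ is a nondegenerate continuum. If $C = M$ then $M$ itself is minimal, contradicting the hypothesis; hence $C$ is a proper closed subset. Moreover $C$ has empty interior: its interior is $f$-invariant and open, so if it were nonempty its frontier would be a proper, nonempty, closed, $f$-invariant subset of $C$, which minimality forbids, forcing $C$ to be clopen and hence all of $M$ by connectedness of $M$. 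Thus $C$ is a nondegenerate connected continuum with empty interior, on which $f$ acts minimally and without fixed points.

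The core step is to produce an annular \nbd of $C$. Here I would exploit the dynamical homogeneity of a minimal set: since $O(x)$ is dense in $C$ and each $f^n$ is a homeomorphism of the ambient surface $M$, the germ of the pair $(M, C)$ at $f^n(x)$ is carried onto the germ at $f^{n+1}(x)$, and by density together with a limiting argument the local separation type of $C$ is the same at every point of $C$. Using that $C$ is connected, has empty interior, and sits in an \emph{orientable} surface, this local homogeneity is meant to force $C$ to separate a small tubular \nbd into exactly two sides, producing an open annular \nbd $U$ with $U \setminus C$ having two annular components. At this stage Lemma \ref{lem:02} enters: when building these neighborhoods as limits of nearby orbit closures, it guarantees that the approximating orbit closures remain connected, which is what keeps the two-sided structure robust under the limiting procedure, while Lemma \ref{lem:01} controls where limit points of the approximants can land. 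The conclusion of this step is that $C$ is an annular continuum.

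To upgrade this to the circloid property I would argue from the dynamical minimality of $C$. A standard compactness argument (nested intersections of annular continua remain annular) yields a minimal annular subcontinuum $C_0 \subseteq C$, i.e.\ a circloid. Since $f$ is a surface homeomorphism with $f(C) = C$, each image $f^n(C_0)$ is again a circloid contained in $C$, so $\overline{\bigcup_n f^n(C_0)}$ is a nonempty closed $f$-invariant subset of $C$ and hence equals $C$ by minimality. I would then combine the essentiality of the annular structure of $C_0$ with the connectedness of $C$ to show that $C$ cannot strictly contain the essential circloid $C_0$ without its annular \nbd being violated, forcing $C_0 = C$; consequently $C$ admits no strictly smaller annular continuum and is itself a circloid. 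As $x \in V$ was arbitrary, $V$ consists of circloids.

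The main obstacle is the middle step: converting the abstract homogeneity of the minimal set into the concrete two-sided (annular) local separation of $C$ in the surface, and ruling out local branching or a higher local separation number. The circloid (minimality of the annular structure) conclusion is genuinely delicate as well, since the orbit of a proper subcircloid can a priori be dense in $C$ as a lamination; closing this gap will require the essentiality argument in the third paragraph rather than dynamics alone.
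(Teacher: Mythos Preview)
Your proposal has a genuine gap at the annularity step, and it mislocates the role of the preceding lemmas. The paper does not argue annularity via local homogeneity at all: it observes that pointwise almost periodicity forces the nonwandering set of $f$ to be all of $M$, and then invokes Koropecki's theorem (Theorem~1.1 in \cite{K}), which states precisely that an aperiodic invariant continuum for a surface homeomorphism with $\Omega(f)=M$ is annular. Your heuristic that ``the local separation type of $C$ is the same at every point'' and that this ``forces two sides'' is not a proof; wild minimal continua need not have a well-defined germ or local separation number, and making this rigorous is essentially the content of \cite{K}. In particular, Lemmas~\ref{lem:01} and~\ref{lem:02} play no role in this lemma whatsoever --- they are used afterwards (for Corollary~\ref{cor:024}), not in producing an annular \nbd of a single orbit closure, so building your argument around them is a misreading of the logical structure.

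For the circloid upgrade the paper's route is also different and avoids your admittedly incomplete dynamical argument with iterated subcircloids. Once $C$ is annular with annular \nbd $U$ and $U\setminus C=A_1\sqcup A_2$, minimality of $C$ yields $C=\partial A_1\cap\partial A_2$, i.e.\ every point of $C$ lies on the frontier of \emph{both} complementary annuli. Embedding $U$ in $\mathbb{S}^2$, write $\mathbb{S}^2\setminus C=D_1\sqcup D_2$ and, for a hypothetical proper annular $C'\subsetneq C$, $\mathbb{S}^2\setminus C'=D'_1\sqcup D'_2$, so that $D_1\sqcup D_2\subsetneq D'_1\sqcup D'_2$. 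For any $x\in C\setminus C'$ the set $D_1\sqcup D_2\sqcup\{x\}$ is connected (since $x$ touches both $D_i$), hence $D'_1\sqcup D'_2$ is connected --- a contradiction. This is a short, purely topological argument once minimality has been used once; no Zorn-type extraction of a minimal annular subcontinuum or orbit-of-circloids density argument is needed.
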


\begin{proof} 
Since  $f$ is pointwise almost periodic, we have that 
the non-wandering set of $f$ is $M$.  
By Theorem 1.1.\cite{K}, each element $C$ in $V$ of $\hat{\mathcal{F}}$ is 
annular.  
Let $U$ be a sufficiently small annular neighbourhood of $C$ such that 
$U - C$ is a disjoint union of two open annuli $A_1, A_2$. 
Since $C$ is $f$-invariant and minimal, 
we have that 
$C = \partial A_1 \cap \partial A_2$. 
Suppose that 
there is an annular continuum $C' \subsetneq C$. 
Then there is an annular neighbourhood $U'$ of $C'$ such that 
$U' \subset U$. 
Embedding $U$ into $\mathbb{S}^2$, 
we may assume that $U$ is a subset of $\mathbb{S}^2$. 
Then 
$\mathbb{S}^2 - C$ is a disjoint union of two open disks $D_1, D_2$
and   
$\mathbb{S}^2 - C'$ is a disjoint union of two open disks $D'_1, D'_2$. 
Since $\mathbb{S}^2 - C' \supsetneq \mathbb{S}^2 - C$, 
we have 
$D_1 \sqcup D_2 \subsetneq
D'_1 \sqcup D'_2$. 
%
Since 
$ D_1 \sqcup  D_2 \sqcup \{ x \}$ for 
any element $x \in C$ is connected,  
we obtain 
$D'_1 \sqcup  D'_2$ is connected. 
This contradicts to disconnectivity. 
Thus  
$C$ is a circloid.   
\end{proof}

%
%
Note that 
a point $x$ is almost periodic if and only if 
for every open neighborhood $U$ of $x$, 
there is $K \in \mathbb{Z}_{\geq 0}$ such that 
$\mathbb{Z} = \{ n, n + 1, \dots, n + K \mid n \in N(x, U) \}$.  
The above lemmas implies the following statement.

\begin{corollary}\label{cor:024}
Suppose $f$ is not minimal but $R$-closed. 
Each point of $\overline{V} -V$ is a fixed point. 
\end{corollary}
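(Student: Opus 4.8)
The plan is to reduce the statement directly to Lemma \ref{lem:02} and the definition of $V$. Note first that the ambient surface $M$ is compact metrizable, hence sequentially compact, and $f$ is assumed $R$-closed; so the hypotheses of Lemma \ref{lem:02}, applied to the flow generated by $f$, are in force throughout. I would then fix an arbitrary point $w \in \overline{V} - V$. Since $M$ is metrizable, its topological closure coincides with the sequential closure, so there is a sequence $(w_n)$ in $V$ with $w_n \to w$. By the very definition of $V$, each $w_n$ lies in $M - \mathrm{Fix}(f)$ and has connected orbit closure $\overline{O(w_n)}$.

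Next I would feed this into Lemma \ref{lem:02}: taking $C_n := \overline{O(w_n)}$ connected with $w_n \to w$, the lemma yields that the orbit closure $\overline{O(w)}$ of the limit point is itself connected. This is the one substantive input, and it is exactly what Lemma \ref{lem:02} was proved to deliver; the $R$-closedness is precisely what prevents a piece of $\overline{O(w)}$ from escaping to a disjoint open set in the limit, which is the phenomenon excluded there.

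Finally I would invoke the dichotomy built into the definition of $V$: a point fails to lie in $V$ only if it is fixed or if its orbit closure is disconnected. Since we have just shown that $\overline{O(w)}$ is connected, and $w \notin V$ by assumption, the sole surviving possibility is $w \in \mathrm{Fix}(f)$, which is the assertion. I do not expect a genuine obstacle here beyond correctly marshaling Lemma \ref{lem:02}; the heavy lifting is done there, and the only point requiring a moment's care is the observation that, once connectivity of the limiting orbit closure is established, being a fixed point is the only remaining alternative in the definition of $V$.
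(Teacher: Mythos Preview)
Your proposal is correct and follows essentially the approach the paper intends: the paper gives no explicit proof but simply asserts that ``the above lemmas'' imply the corollary, and your argument is precisely the natural way to cash this out, feeding a sequence from $V$ into Lemma~\ref{lem:02} and then reading off the conclusion from the definition of $V$. The hypothesis that $f$ is not minimal is in fact not needed for the argument (when $f$ is minimal, $V=M$ and the statement is vacuous), so its absence from your proof is not a gap.
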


Taking the iteration, 
we obtain the following corollary.

\begin{corollary}\label{cor:02}
Suppose $f$ is not minimal. 
For any $x \in M$, 
if $\overline{O(x)}$ is 
not periodic 
but  consists of finitely many connected components, 
then $\overline{O(x)}$ consists of circloids.
\end{corollary}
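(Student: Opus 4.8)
The plan is to reduce Corollary \ref{cor:02} to the non-minimal, connected-closure case already handled by Lemma \ref{lem:0125}, by passing to a suitable iterate $f^k$ that makes each connected component of the orbit closure invariant. Suppose $x \in M$ is such that $\overline{O(x)}$ is not a periodic orbit but has finitely many connected components, say $C_1, \dots, C_k$. Since $f$ is a homeomorphism and $\overline{O(x)}$ is $f$-invariant, $f$ permutes these components; as there are finitely many of them, some power $f^k$ fixes each $C_i$ setwise. Concretely I would take $k$ to be the order of this permutation, so that each $C_i$ is $f^k$-invariant. The point of the corollary's hint ``taking the iteration'' is exactly this: the hypotheses are preserved under passing to $f^k$.

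Next I would verify that $f^k$ remains in the setting of Lemma \ref{lem:0125}. First, $f^k$ is again an $R$-closed (hence pointwise almost periodic) homeomorphism on $M$; this should follow since passing to a finite-index iterate does not destroy the $R$-closed property, the orbit closures under $f^k$ being the connected components $C_i$ of the $f$-orbit closure. Second, $f^k$ is not minimal: if it were, its single minimal set would be all of $M$, forcing $\overline{O(x)}$ to be connected and equal to $M$, contradicting either the finite-component structure together with non-periodicity or the fact that $M$ has more than one $f^k$-orbit closure. With $f^k$ non-minimal and pointwise almost periodic, each component $C_i = \overline{O_{f^k}(x_i)}$ for a suitable point $x_i$ lies in the set $V_{f^k}$ provided it is not a fixed point. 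Since $\overline{O(x)}$ is assumed non-periodic, no $C_i$ can be a single point, so each $C_i$ is a genuine connected non-degenerate $f^k$-orbit closure and hence belongs to $V_{f^k}$.

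Applying Lemma \ref{lem:0125} to $f^k$ then gives that each $C_i$ is a circloid. Since circloid is a purely topological notion (an annular continuum containing no strictly smaller annular continuum), this property is independent of whether we regard $C_i$ as an $f$- or $f^k$-object, so we conclude that $\overline{O(x)} = C_1 \sqcup \cdots \sqcup C_k$ consists of circloids, which is the assertion.

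The main obstacle I anticipate is the verification that $f^k$ is $R$-closed and not minimal, i.e. that the good hypotheses genuinely descend to the iterate. The non-minimality is the more delicate of the two: one must rule out the degenerate possibility that the chosen iterate collapses the dynamics, and the cleanest argument is to observe that $M$ carries more than one $f^k$-orbit closure (for instance, distinct components $C_i$ and $C_j$ are disjoint $f^k$-invariant closed sets when $k \geq 2$, and when $k = 1$ the original $\overline{O(x)}$ is already a proper invariant subset since it is not all of $M$). Once non-minimality and $R$-closedness of $f^k$ are secured, the remainder is a direct citation of Lemma \ref{lem:0125}, with only the routine observation that the excluded fixed-point case is ruled out by the non-periodicity hypothesis.
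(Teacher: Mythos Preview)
Your proposal is correct and follows essentially the same route as the paper: pass to the iterate $f^k$, verify the hypotheses of Lemma~\ref{lem:0125} persist, and conclude each component is a circloid. The paper's proof is terser in two ways: it takes $k$ to be the number of connected components directly (the permutation is automatically a single $k$-cycle since $O(x)$ is dense in $\overline{O(x)}$), and for the persistence of the standing hypothesis it simply cites Theorem~I of Erd\H{o}s--Stone \cite{ES} to get that $f^k$ is pointwise almost periodic, which is all Lemma~\ref{lem:0125} needs---you do not have to argue $R$-closedness of the iterate. The paper then finishes by observing that the components are pairwise homeomorphic via powers of $f$, rather than applying Lemma~\ref{lem:0125} separately to each.
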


\begin{proof} 
Let $k$ be the number of the connected components of $\overline{O(x)}$. 
By Theorem I \cite{ES}, 
we have $f^k$ is also pointwise almost periodic and 
so $\overline{O_{f^k}(x)}$ is connected. 
By Lemma \ref{lem:0125}, 
$\overline{O_{f^k}(x)}$ is a circloid. 
Since each connected component of $\overline{O(x)}$ 
is homeomorphic to each other, 
the assertion holds. 
\end{proof}

This corollary can sharpen Theorem 6 \cite{Ma} 
into the following statement.

\begin{corollary}\label{lem:0155}
Let $f$ be a non-periodic $R$-closed orientation-preserving (resp. reversing) 
homeomorphism on $\mathbb{S}^2$. 
Then 
$\mathbb{S}^2/ \widetilde{f}$ is a closed interval  
 and 
$\hat{\mathcal{F}}_f$ (resp. $\hat{\mathcal{F}}_{f^2}$)  
consists of two fixed points and other circloids. 
\end{corollary}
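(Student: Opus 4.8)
The plan is to reduce to the orientation-preserving case and then to combine the coarse classification of Theorem 6 \cite{Ma} with the circloid refinement of Lemma \ref{lem:0125}, finishing by identifying the orbit class space with an arc through a cut-point analysis.

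First I would dispose of the orientation-reversing case by passing to $f^2$. Since $f$ is non-periodic, so is $f^2$, and by Theorem I \cite{ES} the map $f^2$ is again pointwise almost periodic; as $R$-closedness is inherited by iterates, $f^2$ is an orientation-preserving non-periodic $R$-closed homeomorphism, and the assertion on $\hat{\mathcal{F}}_{f^2}$ is exactly the orientation-preserving conclusion applied to $f^2$. It therefore suffices to treat $f$ orientation-preserving. Here Theorem 6 \cite{Ma} gives exactly two fixed points $p,q$ and that every non-degenerate orbit closure is a connected homology $1$-sphere; in particular there is no non-fixed periodic orbit, so every $x \in \mathbb{S}^2 \setminus \{p,q\}$ has connected orbit closure and $V = \mathbb{S}^2 \setminus \{p,q\}$. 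Since $f$ has a fixed point it is not minimal, so Lemma \ref{lem:0125} applies and every element of $V$, that is, every non-degenerate orbit closure, is a circloid. This already yields that $\hat{\mathcal{F}}_f$ consists of $\{p\}$, $\{q\}$ and circloids.

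It remains to show $Q := \mathbb{S}^2/\widetilde{f} \cong [0,1]$. Because $f$ is $R$-closed, the decomposition $\hat{\mathcal{F}}_f$ is upper semi-continuous and $Q$ is compact Hausdorff; being the quotient of a compact metric space by an upper semi-continuous decomposition into compacta, $Q$ is moreover metrizable, and it is a continuum as the continuous image of $\mathbb{S}^2$. I would then invoke the cut-point characterization of the arc: a metric continuum is homeomorphic to $[0,1]$ precisely when it has exactly two non-cut points. For the two fixed points, $\mathbb{S}^2 \setminus \{p\}$ is saturated and connected, so $Q \setminus \{\pi(p)\} = \pi(\mathbb{S}^2 \setminus \{p\})$ is connected and $\pi(p)$ is a non-cut point; likewise for $\pi(q)$. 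For a circloid $C$ one has $\mathbb{S}^2 \setminus C = D_1 \sqcup D_2$ with $D_1, D_2$ open disks, and the crucial point is that $C$ separates $p$ from $q$: otherwise $p, q$ lie in a common disk, say $D_1$, whence $f$ fixes $p$, preserves $D_1$, and so preserves the closed disk $\overline{D_2}$; Brouwer's theorem then forces a fixed point in $\overline{D_2}$, which is impossible since $D_2$ contains no fixed point and the circloid $C$ is a non-degenerate minimal set. Thus $p \in D_1$, $q \in D_2$, each $D_i$ is $f$-invariant hence saturated, and since the quotient map carries saturated open sets to open sets, $Q \setminus \{\pi(C)\} = \pi(D_1) \sqcup \pi(D_2)$ exhibits $\pi(C)$ as a cut point. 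As every point of $Q$ is $\pi(p)$, $\pi(q)$, or some $\pi(C)$, the non-cut points are exactly $\pi(p)$ and $\pi(q)$, and $Q \cong [0,1]$.

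The routine parts are the reduction to iterates and the bookkeeping of fixed points from \cite{Ma}. I expect the main obstacle to be the geometric separation step --- verifying via Brouwer that each circloid separates the two fixed points --- together with the topological quotient facts it feeds into, namely that saturated open sets have open image (the openness of $\pi$ built into weak almost periodicity) and that $Q$ is metrizable, so that the cut-point characterization of the arc is genuinely applicable.
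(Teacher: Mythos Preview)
Your approach is essentially the paper's: invoke \cite{Ma} for the two fixed points and connected orbit closures, apply Lemma~\ref{lem:0125} to upgrade to circloids, then use a Brouwer-type argument to see that each circloid separates the two fixed points. The paper phrases this last step as showing no circloid is null-homotopic in the open annulus $A=\mathbb{S}^2\setminus\{p,q\}$ and leaves the passage to a closed interval implicit; your cut-point characterization of the arc makes that conclusion fully explicit, which is a genuine improvement in detail rather than a different strategy.

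One caveat on the separation step: as written, ``preserves the closed disk $\overline{D_2}$; Brouwer's theorem then forces a fixed point'' is not quite right. For a circloid $C$ that is not a simple closed curve, $\overline{D_2}=D_2\cup C$ need not be homeomorphic to a closed $2$-disk, so the Brouwer fixed-point theorem for disks does not directly apply. The paper avoids this by working on the \emph{open} disk and invoking Brouwer's plane translation theorem \cite{B}: $D_2$ is homeomorphic to $\mathbb{R}^2$, and a fixed-point-free orientation-preserving homeomorphism of the plane has every point wandering, contradicting pointwise almost periodicity. That is the version of Brouwer you want here; with this adjustment your argument goes through.
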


\begin{proof} 
Suppose that $f$ is orientation-preserving. 
%
By Theorem 3 and 6 \cite{Ma}, 
there are exactly two fixed points and 
all other orbit closures of $f$ are connected. 
By Lemma \ref{lem:0125}, 
they are circloids. 
We show that 
$M/ \hat{\mathcal{F}}$ is a closed interval. 
Indeed, 
let $A$ be the sphere minus two fixed points. 
Suppose that 
there is a circloid $L$
which is null homotopic in $A$. 
Let $D$ be a disk bounded by $L$ in $A$. 
Since $M$ consists of non-wandering points, 
the Brouwer's non-wandering Theorem \cite{B} to $D$  
implies that 
$f|_{D}$ has a fixed point. 
This contradicts to the non-existence of fixed points in $A$. 
On the orientation reversing case, 
since $f^2$ is orientation-preserving, 
the assertion holds. 
\end{proof}

Note that 
if there is a dense orbit and 
$\hat{\mathcal{F}}$ is pointwise almost periodic, 
then 
$\hat{\mathcal{F}}$ is minimal and $V = T^2 = M$. 
Now we proof a key lemma.

\begin{lemma}\label{lem:01555}
Suppose that 
$f$ is an orientation-preserving (resp. reversing) 
$R$-closed homeomorphism on an orientable connected closed surface $M$. 
If  
there is a minimal set which is a circloid, 
then 
$M/\hat{\mathcal{F}} = \overline{V/\hat{\mathcal{F}}}$ 
is a closed interval or a circle. 
Moreover 
either 
$M \cong \mathbb{S}^2$ and 
$\hat{\mathcal{F}}_f$ 
(resp. $\hat{\mathcal{F}}_{f^2}$) consists of 
exactly two fixed points and other circloids, 
or 
$M \cong \mathbb{T}^2$ and 
$\hat{\mathcal{F}}_f$ 
(resp. $\hat{\mathcal{F}}_{f^2}$) consists of 
essential circloids. 
\end{lemma}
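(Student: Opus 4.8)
The plan is to reduce to the orientation-preserving case and then to analyze the quotient $M/\hat{\mathcal{F}}$ directly, showing it is a compact connected $1$-manifold. First I would replace $f$ by $f^2$ when $f$ is orientation-reversing, so that I may assume $f$ is orientation-preserving. Since a circloid is a proper annular continuum, the existence of a circloid minimal set forces $f$ to be non-minimal, so Lemma \ref{lem:0125} applies: every connected non-fixed orbit closure is a circloid, and by Corollary \ref{cor:024} every point of $\overline{V}\setminus V$ is a fixed point. Because $f$ is $R$-closed, the decomposition $\hat{\mathcal{F}}$ is upper semi-continuous and $M/\hat{\mathcal{F}}$ is compact, connected and Hausdorff; moreover $V$ is saturated, so its image $V/\hat{\mathcal{F}}$ is open in the quotient.

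The core of the argument is a local product structure along $V$. Fix $x\in V$ with circloid $C=\overline{O(x)}$ and a small annular neighbourhood $U$ with $U\setminus C=A_1\sqcup A_2$. Using upper semi-continuity I would shrink $U$ to a saturated neighbourhood so that every orbit closure meeting $U$ lies inside $U$; Lemma \ref{lem:02} keeps these nearby orbit closures connected, hence by Lemma \ref{lem:0125} they are again circloids, each essential in the annulus $U$. Since essential circloids in an annulus are linearly ordered by separation, the saturation of this neighbourhood maps to an open interval, giving a chart showing $V/\hat{\mathcal{F}}$ is a $1$-manifold without boundary and that $V$ is open in $M$.

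I expect this openness step to be the main obstacle, because it must rule out degenerate or disconnected fibres (stray fixed or periodic orbits) accumulating on $C$ from inside $U$; this is exactly where upper semi-continuity, Lemma \ref{lem:02}, and the nesting of essential circloids in an annulus are all needed simultaneously. Granting it, $V$ is closed in $M\setminus\mathrm{Fix}(f)$ by Corollary \ref{cor:024} and open by the chart, hence clopen there, so the circloid locus fills $M\setminus\mathrm{Fix}(f)$. Consequently the frontier of the open $1$-manifold $V/\hat{\mathcal{F}}$ consists only of images of fixed points, and adjoining these shows $M/\hat{\mathcal{F}}=\overline{V/\hat{\mathcal{F}}}$ is a compact connected $1$-manifold with boundary, i.e.\ a closed interval or a circle, the boundary points being precisely the fixed fibres.

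Finally I would read off the two cases. If $M/\hat{\mathcal{F}}\cong S^1$ there are no degenerate fibres, so $M$ is the total space of an $S^1$-bundle over $S^1$ with circloid fibres; orientability excludes the Klein bottle and yields $M\cong\mathbb{T}^2$, and every fibre is essential, since an inessential circloid would bound a disk and, by the Brouwer non-wandering theorem \cite{B} exactly as in Corollary \ref{lem:0155}, would contain a fixed point, contradicting the absence of fixed fibres. If $M/\hat{\mathcal{F}}\cong[0,1]$, the two boundary points are fixed points and collapsing the two ends of the circloid family $C\times[0,1]$ produces $M\cong\mathbb{S}^2$ with exactly two fixed points, so Corollary \ref{lem:0155} identifies $\hat{\mathcal{F}}_f$ as two fixed points together with the other circloids. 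This exhausts the dichotomy and completes the proof.
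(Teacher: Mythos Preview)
Your overall strategy---show $V$ is open, identify $V/\hat{\mathcal{F}}$ as a $1$-manifold, then read off the surface---is the same as the paper's, but the mechanism you propose for the crucial step (openness of $V$) is different and contains a real gap. You invoke Lemma~\ref{lem:02} to conclude that orbit closures near the circloid $C$ are connected, but that lemma points the other way: it says that a \emph{limit} of connected orbit closures is connected, i.e.\ it gives closedness of the connected locus (and hence Corollary~\ref{cor:024}), not openness. Nothing in your toolkit rules out, for instance, a sequence of period-$k$ orbits or Cantor minimal sets sitting inside the saturated annular neighbourhood and accumulating on $C$; upper semicontinuity only forces such orbit closures to lie in $U$, not to be connected, and the ``nesting of essential circloids'' is only available once you already know the fibres are circloids. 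You flag this yourself as the main obstacle, but the proposed combination of Lemma~\ref{lem:02}, upper semicontinuity, and nesting does not close it.

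The paper handles exactly this point by an indirect device: it builds a saturated open annular neighbourhood $\mathrm{Fill}(U)$ of $C$, takes its two-point compactification to obtain a sphere $N$ with an induced $R$-closed homeomorphism $f'$, and then applies Corollary~\ref{lem:0155} (i.e.\ Mason's theorem for $\mathbb{S}^2$) to $f'$. Mason's result forces every orbit closure of $f'$ other than the two added points to be a circloid, which immediately gives that every orbit closure in the original neighbourhood is a circloid---precisely the openness of $V$ that your direct argument cannot reach. In short, the missing idea is the compactification-to-a-sphere trick that lets one bootstrap from the already-established $\mathbb{S}^2$ case; without it, ruling out disconnected minimal sets near $C$ amounts to reproving Mason's theorem locally.
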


\begin{proof} 
Fix a metric compatible to the topology of $M$. 
First, suppose that 
$f$ is orientation-preserving. 
First we show that $V$ is open. 
Let $L$ be a 
circloid of $\hat{\mathcal{F}}$ 
with a sufficiently small annular neighbourhood $A$. 
Since $\hat{\mathcal{F}}$ is $R$-closed,  
Lemma 1.6 \cite{Y} implies that 
the quotient map $M \to M/\hat{\mathcal{F}}$ is closed 
and so 
the saturation $F : = \mathrm{Sat}(M - A)$ is closed and nonempty. 
Since $M/\hat{\mathcal{F}}$ is compact Hausdorff, 
there is a small number $\varepsilon > 0$ 
such that 
$F$ does not intersect with 
the closure of 
the $\varepsilon$-\nbd  $B_{\varepsilon}(L)$ 
of $L$. 
Since $\hat{\mathcal{F}}$ is upper semi-continuous, 
there is an open saturated neighbourhood $U \subseteq B_{\varepsilon}(L)$ 
of $L$. 
Since $F \supseteq M - A$ and 
$F \cap  \overline{B_{\varepsilon}(L)} = \emptyset$, 
we have 
$\overline{U} \subseteq A$. 
Since $M$ is compact metrizable 
and since  
$M - U$ and $L$ are disjoint closed, 
there is some $\varepsilon' \in (0, \varepsilon)$ such that 
the open $\varepsilon'$-\nbd $V := B_{\varepsilon'}(L)$ of $L$ 
contained in $U$. 
Then $V$ is arcwise connected. 
Since $L$ is invariant and contained in $V$, 
we have that 
$L \subset f^k(V)$ for any $k \in \Z$ and so  
the saturation $\mathrm{Sat}(V) = \cup_{k \in \Z} f^k(V)$
 is arcwise connected open  
and is contained in $U$. 
By replacing $U$ by $\mathrm{Sat}(V)$, 
we may assume that 
$U$ is arcwise connected. 
Define 
$\mathop{\mathrm{Fill}}(U) : = \cup \{ B  \subset A : \text{disk} \mid
 \partial B = \gamma \text{ for some loop } \gamma  \subset U\}$. 
We show that 
$\mathop{\mathrm{Fill}}(U)$ is annular.  
Indeed, 
since $U \subseteq B_{\varepsilon}(L)$, 
we have $\mathop{\mathrm{Fill}}(U) \subseteq B_{\varepsilon}(L) \subseteq A$. 
Since $A$ is an open annulus, 
we can consider an embedding of $A$ to a sphere $S$  
such that the complement of $L$ consists of 
two open disks $D_1, D_2$. 
Then $A \cup D_i$ is an open disk. 
For each connected component $B$ of $A - U$ which is contained by 
a disk bounded by a loop in $U$, 
the saturation $\mathrm{Sat}(B)$ is contained in a union of disks 
bounded by loops in $U$. 
Therefore 
$\mathop{\mathrm{Fill}}(U) \cup D_i 
 = \cup \{ B  \subset A \cup D_i : \text{disk} \mid
 \partial B = \gamma \text{ for some loop } \gamma  \subset U \cup D_i \}
$ 
is a simply connected open subset of the disk $A \cup D_i$. 
By Riemann mapping theorem, 
this implies that 
$\mathop{\mathrm{Fill}}(U) \cup D_i$ is homeomorphic to an open disk 
and so $\mathop{\mathrm{Fill}}(U)$ is annular. 
Note that $\mathop{\mathrm{Fill}}(U)$ is saturated. 
Let $N$ be the two points compactification of $\mathop{\mathrm{Fill}}(U)$  
and $f'$ the resulting homeomorphism on $N$ adding 
new two fixed points. 
Then $N$ is a sphere and 
$L$ is also a minimal set of $f'$ which separates the new two fixed points. 
Since $M/\widetilde{f}$ is normal, 
the closedness of $M - \mathop{\mathrm{Fill}}(U)$ 
implies that 
$N/\widetilde{f'}$ is Hausdorff. 
Hence 
we have $f'$ is $R$-closed. 
By Corollary \ref{lem:0155}, 
the orbit closures on $N$ are 
exactly two fixed points and other circloids. 
This implies that 
$V$ is open and 
the quotient of each connected component of $V$ is totally ordered. 
By Corollary \ref{cor:024}, 
each boundary of $V$ is a fixed point 
and so 
$M/ \mathcal{F}$ is an closed interval or a circle. 
If $V$ has nonempty boundaries, 
then 
$M$ is a sphere 
and 
if $V$ has no boundaries 
then $M$ is a torus. 
This completes a proof of the orientable case. 
Suppose that 
$f$ is orientation-reversing. 
Since $f^2$ is orientation-preserving 
and since 
$M/\hat{\mathcal{F}}_{f^2}$ 
is a double branched covering of 
$M/\hat{\mathcal{F}}$, 
we have that 
$M/\hat{\mathcal{F}}$ is a closed interval. 
\end{proof}

In the higher genus case, 
we obtain the following corollary.

\begin{corollary}\label{cor:0154}
Let  $f$ be a $R$-closed homeomorphism on 
a closed surface with genus more than one. 
Then 
each non-periodic minimal set of $f$ 
has infinitely many connected components.  
\end{corollary}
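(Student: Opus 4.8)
The plan is to argue by contradiction: assume $f$ admits a non-periodic minimal set $C$ with only finitely many connected components, say $k$ of them, and derive that $M$ has genus at most one. First I would rule out the globally minimal case $C = M$: a closed orientable surface of genus $>1$ admits no minimal homeomorphism (a standard consequence of the Lefschetz fixed point theorem applied to the iterates $f^n$, since $\chi(M) = 2 - 2g < 0$ forces some $f^n$ to have a fixed point, contradicting minimality). Hence $C \subsetneq M$, so $f$ is not minimal and Corollary \ref{cor:02} applies.

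Since $f$ is pointwise almost periodic, $C = \overline{O(x)}$ for every $x \in C$; as it is non-periodic with finitely many components, Corollary \ref{cor:02} shows that each of its $k$ components is a circloid. Minimality of $C$ forces $f$ to permute these components in a single $k$-cycle, so $f^k$ fixes each component setwise; a counting argument (the $k$ sets $\overline{O_{f^k}(f^i x)}$ are connected and together exhaust the $k$ components of $C$) then identifies each component with an $f^k$-orbit closure, hence with a minimal set of $f^k$ that is a circloid. The idea is now to feed this circloid $f^k$-minimal set into the key Lemma \ref{lem:01555}, applied to the homeomorphism $f^k$ (which is orientation-preserving or -reversing according to the parity of $k$). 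Its conclusion is that $M \cong \mathbb{S}^2$ or $M \cong \mathbb{T}^2$; either way the genus is at most one, contradicting $g > 1$ and finishing the proof.

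The one hypothesis of Lemma \ref{lem:01555} that is not immediate is that $f^k$ be $R$-closed, and this is the main obstacle. Pointwise almost periodicity of $f^k$ is supplied by Theorem I \cite{ES}, so by the equivalences recorded in the Preliminaries it suffices to show that the orbit-class space $M/\hat{\mathcal{F}}_{f^k}$ is Hausdorff. I would deduce this from the already-known Hausdorffness of $M/\hat{\mathcal{F}}_{f}$: the decomposition into $f^k$-orbit closures refines that into $f$-orbit closures, the finite cyclic group generated by the map induced by $f$ on $M/\hat{\mathcal{F}}_{f^k}$ has orbit space $M/\hat{\mathcal{F}}_{f}$, and separation of the finer quotient can be recovered from that of the coarser one, using connectedness-under-limits arguments of the type in Lemma \ref{lem:02} to control into which component of the limiting $f$-orbit closure a limit point falls. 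This is exactly the passage from $f$ to an iterate already used tacitly in the orientation-reversing half of Lemma \ref{lem:01555}, where $M/\hat{\mathcal{F}}_{f^2}$ is treated as a branched double cover of $M/\hat{\mathcal{F}}$; making that step rigorous for general $k$ is where the real work lies.
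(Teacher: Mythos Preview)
Your argument matches the paper's route: pass to $f^k$, obtain a circloid minimal set for $f^k$, and invoke Lemma~\ref{lem:01555} to force $M\cong\mathbb{S}^2$ or $\mathbb{T}^2$; your Lefschetz step excluding the globally minimal case $C=M$ is a harmless clarification the paper leaves implicit. The obstacle you single out---that $f^k$ must itself be $R$-closed for Lemma~\ref{lem:01555} to apply---is not argued in this proof but is available as a black box from Theorem~1.1 of \cite{Y2} (the paper invokes it explicitly in the proof of Theorem~\ref{cor:0155}), so you need not reconstruct the Hausdorffness of $M/\hat{\mathcal{F}}_{f^k}$ by hand.
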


\begin{proof}
Suppose that 
there is a non-periodic minimal set $\mathcal{M}$ of $f$ with 
at most finitely many connected components.  
Let $k$ be the number of connected components of $\mathcal{M}$. 
Then each connected component $\mathcal{M}'$ of $\mathcal{M}$ is 
a minimal set of $f^k$.  
By Lemma \ref{lem:0125}, 
we obtain that 
$\mathcal{M}'$ is a circloid. 
By Corollary \ref{cor:02}, 
we have 
$M$ is $\mathbb{S}^2$ or $\mathbb{T}^2$. 
This contradicts to the hypothesis. 
\end{proof}

\section{Main results and their proofs}

We say that 
a minimal set $\mathcal{M}$ on a surface homeomorphism 
$f : S \to S$ 
is an extension of 
a Cantor set (resp. a periodic orbit) 
if 
there are 
a surface homeomorphism $\widetilde{f}: S \to S$ 
and 
a surjective continuous map
$p: S^2 \to S^2$ which is homotopic to the identity 
such  that 
$p \circ f = \widetilde{f} \circ p$  
and $p(\mathcal{M})$ 
is a Cantor set (resp. a periodic orbit) 
which is a minimal set of $\widetilde{f}$. 
Now we state main results. 

\begin{theorem}\label{cor:0156} 
Let $M$ be a connected orientable closed surface with genus more than one. 
Each minimal set of  an $R$-closed homeomorphism on $M$   
is 
either a periodic orbit 
or 
an extension of a Cantor set. 
\end{theorem}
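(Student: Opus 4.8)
The plan is to argue along the dichotomy already supplied by the preliminaries, and then, in the non-periodic case, to realize the minimal set as a collapsed family of continua by means of a Moore-type quotient. First I would dispose of the trivial case: if the minimal set $\mathcal{M}$ is a periodic orbit there is nothing to prove, so assume $\mathcal{M}$ is not periodic. By Corollary \ref{cor:0154}, $\mathcal{M}$ then has infinitely many connected components. Since $f$ permutes these components and acts minimally on $\mathcal{M}$, the decomposition of $\mathcal{M}$ into its connected components is $f$-invariant, and I would record at the outset that the resulting component space $Q := \mathcal{M}/{\sim}$ is a Cantor set: it is compact, metrizable and totally disconnected; it is infinite by Corollary \ref{cor:0154}; and it has no isolated point, because the induced action makes $Q$ a compact minimal dynamical system, and a compact minimal system with an isolated point is a finite orbit. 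Hence $Q$ is perfect and totally disconnected, i.e. a Cantor set.

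Second, I would build the collapsing map. For each connected component $C$ of $\mathcal{M}$ let $\mathrm{Fill}(C)$ be its filling, obtained by adjoining all complementary domains of $C$ that are disks, exactly as $\mathrm{Fill}$ is used in Lemma \ref{lem:01555}; these fills are cellular continua, are pairwise disjoint, and are permuted by $f$. Declaring the nondegenerate elements of a decomposition $\mathcal{G}$ of $M$ to be the sets $\mathrm{Fill}(C)$ and all remaining elements to be singletons, I would verify that $\mathcal{G}$ is an $f$-invariant upper semicontinuous decomposition, the upper semicontinuity following from the $R$-closedness of $f$ together with Lemma \ref{lem:02}, which controls how connected closures, and hence the fills, accumulate. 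By the surface version of R.~L.~Moore's decomposition theorem, since every nondegenerate element of $\mathcal{G}$ is a cellular (hence non-separating, acyclic) continuum, the quotient $M/\mathcal{G}$ is homeomorphic to $M$ and the quotient map $p\colon M \to M/\mathcal{G} \cong M$ is homotopic to the identity. The homeomorphism $f$ descends through $p$ to a homeomorphism $\widetilde{f}$ of $M$ with $p\circ f = \widetilde{f}\circ p$.

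Third, I would identify the image. By construction $p$ collapses each $\mathrm{Fill}(C)$, and in particular each connected component of $\mathcal{M}$, to a point and is injective off $\bigcup_C \mathrm{Fill}(C)$, so $p(\mathcal{M})$ is precisely the component space $Q$, which is a Cantor set by the first step. The semiconjugacy $p\circ f = \widetilde{f}\circ p$ together with minimality of $\mathcal{M}$ under $f$ shows that $p(\mathcal{M})$ is a minimal set of $\widetilde{f}$. This exhibits $\mathcal{M}$ as an extension of a Cantor set in the sense of the definition above, completing the proof.

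The hard part will be the second step: verifying that the fills $\mathrm{Fill}(C)$ are genuinely cellular and that $\mathcal{G}$ is upper semicontinuous and meets the hypotheses of the Moore-type theorem, so that the quotient is again $M$ and $p$ is homotopic to the identity. In particular one must rule out that a component of $\mathcal{M}$ is essential, hence non-cellular, in a surface of genus more than one. This is exactly where the genus hypothesis and the accumulation control of Lemma \ref{lem:02} and Corollary \ref{cor:0154} combine: infinitely many disjoint essential components cannot coexist under a minimal action without forcing a connected limit continuum that would make $\mathcal{M}$ itself connected, contradicting the infinitude of its components; a careful version of this accumulation argument is the crux of the proof.
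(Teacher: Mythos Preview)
Your approach differs substantially from the paper's. The paper invokes the Passeggi--Xavier classification \cite{PX} as a black box: every minimal set that is not a finite union of circloids is already an extension of a periodic orbit or of a Cantor set; Lemma~\ref{lem:01555} rules out the circloid case on a surface of genus at least two; and when $\mathcal{M}$ is an extension of a periodic orbit, results from \cite{JKP} and \cite{PX} force it to have only finitely many connected components, whence Corollary~\ref{cor:0154} makes it genuinely periodic. Your direct Moore-type collapse is, in effect, an attempt to reprove the relevant case of \cite{PX} from scratch.

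There are genuine gaps in that construction. First, the fills $\mathrm{Fill}(C)$ need not be pairwise disjoint: nothing you have said prevents one component $C_2$ of $\mathcal{M}$ from lying inside a complementary disk of another component $C_1$, in which case $\mathrm{Fill}(C_2)\subset\mathrm{Fill}(C_1)$ and your $\mathcal{G}$ is not a decomposition at all. Second, your accumulation sketch for ruling out essential components does not work as stated: a Hausdorff limit of components is a connected subset of $\mathcal{M}$ and hence lies in a \emph{single} component of $\mathcal{M}$; this in no way forces $\mathcal{M}$ itself to be connected. Showing that components are inessential on a higher-genus surface is precisely the substantive content of \cite{PX}, and it goes through finiteness of isotopy classes of disjoint essential filled continua rather than an accumulation-to-connectedness argument. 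Third, Lemma~\ref{lem:02} controls convergence of \emph{orbit closures} under an $R$-closed action, not fills of components of a fixed minimal set, so it does not by itself yield upper semicontinuity of $\mathcal{G}$. Each of these points can in principle be repaired, but doing so amounts to rebuilding the machinery of \cite{PX}, which the paper simply cites.
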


\begin{proof} 
Let $\mathcal{M}$ be a minimal set. 
By Lemma \ref{lem:01555}, 
$M$ is not a finite disjoint union of circloids. 
By Theorem \cite{PX}, 
we have that 
$\mathcal{M}$ is 
is an extension of either a periodic orbit or a Cantor set. 
We may assume that 
$\mathcal{M}$ is 
is an extension of a periodic orbit. 
By the proof of Addendum 3.17 \cite{JKP} 
and Proposition 5.1 \cite{PX}, 
we obtain that 
$\mathcal{M}$ has at most finitely many connected components. 
By Corollary \ref{cor:0154},  
this minimal set $\mathcal{M}$ is a periodic orbit. 
\end{proof}

In the toral case, 
we obtain the following statement. 

\begin{theorem}\label{cor:0155} 
Each $R$-closed toral homeomorphism $f$ 
satisfies one of the following: 
\\
1. 
$f$ is minimal. 
\\
2. 
$f$ is periodic. 
\\
3. 
Each minimal set is 
finite disjoint union of essential circloids.  
\\
4.  
There is a minimal set which is  
an extension of a Cantor set. 
\end{theorem}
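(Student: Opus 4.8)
The plan is to peel off the two easy alternatives and then force the remaining case into either (2) or (3). If $f$ is minimal we are in case (1), so assume $f$ is not minimal; if some minimal set is an extension of a Cantor set we are in case (4), so assume no minimal set is a Cantor-set extension. Under these two standing hypotheses I will produce either case (2) or case (3). Every step below is insensitive to replacing $f$ by $f^2$ (using that $M/\hat{\mathcal{F}}_{f^2}$ is a double branched cover of $M/\hat{\mathcal{F}}$, exactly as at the end of the proof of Lemma \ref{lem:01555}, and that each $f$-minimal set is a union of at most two $f^2$-minimal sets), so via the "resp.'' clauses of the cited lemmas I may and do assume $f$ is orientation-preserving, the orientation-reversing case following by passing to $f^2$.

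Because $f$ is $R$-closed it is pointwise almost periodic, so every orbit closure $\overline{O(x)}$ is itself a minimal set; thus the minimal sets are exactly the elements of $\hat{\mathcal{F}}$. Fix any minimal set $\mathcal{M}$. By the classification Theorem \cite{PX}, $\mathcal{M}$ is an extension of a periodic orbit or of a Cantor set, and the latter is excluded, so $\mathcal{M}$ is an extension of a periodic orbit. By the proof of Addendum 3.17 \cite{JKP} together with Proposition 5.1 \cite{PX} (as already invoked in the proof of Theorem \ref{cor:0156}), such an $\mathcal{M}$ has only finitely many connected components. Hence every minimal set of $f$ has finitely many components.

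I now split according to whether a non-periodic minimal set exists. If every minimal set is a periodic orbit, then, since orbit closures are minimal sets, every point of $\mathbb{T}^2$ is periodic, i.e. $f$ is pointwise periodic; by the classical theorem of Montgomery that a pointwise periodic homeomorphism of a connected manifold is periodic, $f$ is periodic and we are in case (2). Otherwise pick a non-periodic minimal set $\mathcal{M}$ with, say, $k$ components; by Corollary \ref{cor:02} it is a disjoint union of $k$ circloids, and each component is a circloid that is a minimal set of $f^{k}$. Since $f^{k}$ is again $R$-closed (its pointwise almost periodicity is Theorem I \cite{ES}, and the orbit class space stays Hausdorff because $\hat{\mathcal{F}}_{f^{k}}$ refines $\hat{\mathcal{F}}_{f}$ with finite fibres), Lemma \ref{lem:01555} applied to $f^{k}$ on the torus forces $\hat{\mathcal{F}}_{f^{k}}$ to consist entirely of essential circloids. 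Therefore every orbit closure of $f$ is a finite disjoint union of essential circloids; in particular there are no periodic orbits at all, and each minimal set is a finite disjoint union of essential circloids, which is case (3).

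The main obstacle is the interface between the imported classification results and the intrinsic dichotomy supplied by Lemma \ref{lem:01555}. Two points demand care. First, I must ensure that a single circloid minimal set propagates to the entire decomposition: this is precisely why Lemma \ref{lem:01555} is stated so as to pin down all of $\hat{\mathcal{F}}$, and it is what excludes the a priori possible \emph{mixed} configuration in which some minimal sets are periodic orbits while others are essential circloids — the presence of one circloid forbids periodic points outright. Second, the passage to $f^{k}$ must be justified, namely that $R$-closedness (equivalently, Hausdorffness of the orbit class space) is inherited by powers; one who prefers to avoid this can instead argue componentwise using Corollary \ref{cor:024} and the filling construction inside the proof of Lemma \ref{lem:01555}. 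Finally, Montgomery's theorem is the cleanest way to close the "all periodic'' branch into case (2); any self-contained substitute would have to bound the periods uniformly on the connected surface $\mathbb{T}^2$, which is exactly Montgomery's conclusion.
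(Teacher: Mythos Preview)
Your overall architecture matches the paper's: rule out cases (1) and (4), produce a circloid minimal set for a suitable power $f^k$, and then invoke Lemma \ref{lem:01555} on $f^k$ to force every orbit closure to be a finite union of essential circloids. The difference is in how the circloid is found. The paper assumes from the outset that $f$ is not periodic and then quotes Theorem 4 of \cite{JKP} to obtain directly a minimal set that is a finite disjoint union of circloids; you instead run the general surface classification \cite{PX} on \emph{every} minimal set, conclude each has finitely many components, and then split off the ``all periodic'' branch via Montgomery's theorem. Both routes are valid; the paper's is shorter because the torus-specific \cite{JKP} result already packages the dichotomy, while yours has the virtue of reusing exactly the same external input as Theorem \ref{cor:0156}.

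The one genuine soft spot is your inline justification that $f^k$ is again $R$-closed. Saying ``$\hat{\mathcal{F}}_{f^{k}}$ refines $\hat{\mathcal{F}}_{f}$ with finite fibres'' is not enough: a quotient that maps with finite fibres onto a Hausdorff space need not be Hausdorff (the line with doubled origin maps to the line with fibres of size at most two). One really has to show that nearby $f$-orbit closures split into the \emph{same} number of $f^k$-pieces in a coherent way, which is a nontrivial upper-semicontinuity argument. The paper does not attempt this in place; it simply cites Theorem 1.1 of \cite{Y2} for the inheritance of $R$-closedness under iteration. You correctly flag this step as delicate in your closing paragraph, but the parenthetical argument you give should be replaced either by that citation or by a full proof.
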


\begin{proof} 
Suppose that 
$f$ is neither minimal nor periodic 
and 
there are no minimal sets
which are extensions of Cantor sets. 
Since $f$ is not periodic, 
by Theorem 4 \cite{JKP}, 
there is a minimal set 
$\mathcal{M}$ which is a finite disjoint union of circloids. 
Let $k$ be the number of the connected components of $\mathcal{M}$. 
By Theorem 1.1 \cite{Y2}, 
the iteration $f^k$ is also $R$-closed. 
Applying Lemma \ref{lem:01555} to $f^k$, 
we have that  
each minimal set of $f$ is 
a finite disjoint union of essential circloids. 
\end{proof} 

Recall that $f$ is aperiodic if 
$f$ has no periodic orbits. 
By Theorem D \cite{J2}, 
we obtain the following corollary. 

\begin{corollary}\label{cor:015}
Each orbits closure of 
a non-minimal aperiodic $R$-closed 
toral homeomorphism isotopic to identity 
is a circloid. 
\end{corollary}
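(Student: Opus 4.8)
The plan is to feed the trichotomy of Theorem \ref{cor:0155} into the rotation-theoretic input of Theorem D \cite{J2} and then read off connectedness. I would begin by recording the two structural facts that make the statement meaningful: since $f$ is $R$-closed it is pointwise almost periodic, so $\hat{\mathcal{F}}_f = \{ \overline{O(x)} \}$ is a decomposition of $\mathbb{T}^2$ into minimal sets, and hence ``each orbit closure is a circloid'' is equivalent to ``each minimal set is a circloid.'' Moreover, aperiodicity means $\mathrm{Fix}(f^n) = \emptyset$ for every $n$, so $f$ is not periodic, and by hypothesis $f$ is not minimal. Applying Theorem \ref{cor:0155} therefore leaves only two alternatives: either every minimal set is a finite disjoint union of essential circloids (case 3), or some minimal set is an extension of a Cantor set (case 4).

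The core of the argument is to eliminate case 4 and to collapse case 3 to a single connected circloid, and this is exactly where the hypothesis that $f$ is isotopic to the identity enters through Theorem D \cite{J2}. I would invoke Theorem D \cite{J2} to conclude that an aperiodic toral homeomorphism isotopic to the identity admits no minimal set that is an extension of a Cantor set, ruling out case 4 outright, and that its essential annular minimal sets are connected. Concretely, in case 3 a minimal set $\mathcal{M}$ with $k \geq 2$ components would force $f$ to permute $k$ disjoint parallel essential circloids cyclically; I would rule this out by pairing Theorem D \cite{J2} with the isotopy class of the identity, which controls the admissible rotation behaviour on the associated annular structure.

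Once every minimal set is known to be connected, the conclusion is immediate from the preliminaries. Since $f$ is aperiodic we have $\mathrm{Fix}(f) = \emptyset$, so the set $V = V_f$ of points whose orbit closure is connected equals all of $\mathbb{T}^2$; and since $f$ is not minimal, Lemma \ref{lem:0125} gives that $V$ consists of circloids. Hence every orbit closure $\overline{O(x)}$ is a circloid, as required.

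The main obstacle is the middle step: extracting from Theorem D \cite{J2} the precise statement that, for a map in the isotopy class of the identity, aperiodicity simultaneously forbids a Cantor-set extension and forbids the cyclic permutation of several parallel essential circloids, so that the only surviving possibility in Theorem \ref{cor:0155} is a single connected circloid. The surrounding steps — identifying orbit closures with minimal sets and applying Lemma \ref{lem:0125} — are routine bookkeeping with the pointwise-almost-periodic decomposition.
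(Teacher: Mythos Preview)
Your proposal is correct and follows the same route as the paper: the paper's proof is the single sentence ``By Theorem D \cite{J2}'', meaning exactly what you wrote out in detail --- feed the non-minimal aperiodic hypothesis into Theorem \ref{cor:0155}, then use J\"ager's Theorem D (which applies because $R$-closedness gives non-wandering) to kill the Cantor-extension alternative and force connectedness of the essential-circloid minimal sets. Your final appeal to Lemma \ref{lem:0125} is harmless but redundant, since case 3 of Theorem \ref{cor:0155} already hands you circloids once you know each minimal set is connected.
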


\section{Applications to codimension two foliations}

In \cite{Y}, 
it show that 
a foliated space on a compact metrizable space 
which is minimal or ``compact and without infinite holonomy'', 
is $R$-closed.
Since each compact codimension two foliation on a compact manifold
has finite holonomy \cite{Ep} \cite{V}, 
we have that 
the set of minimal or compact codimension two foliations is 
contained in 
the set of codimension two $R$-closed foliations.   
The following examples are codimension two $R$-closed foliations 
which are neither minimal nor compact. 
%
%
%
%
Considering an axisymmetric embedding of $\mathbb{T}^2$ (resp. $\mathbb{S}^2$) into $\R^3$, 
any irrational rotation on it around the axis is 
a non-periodic $R$-closed homeomorphism. 
Taking a suspension on $\mathbb{T}^2$ (resp. $\mathbb{S}^2$), 
we obtain the following statement by Theorem \ref{cor:0155} 
(resp. Corollary \ref{lem:0155}). 

\begin{corollary}\label{cor:011} 
Each suspension 
of 
a  $R$-closed homeomorphism on $\mathbb{T}^2$ 
or $\mathbb{S}^2$  
which is neither minimal nor periodic 
induces a codimension two $R$-closed foliation which is neither minimal nor compact.  
Moreover 
there are such homeomorphisms 
on $\mathbb{T}^2$ 
and  $\mathbb{S}^2$. 
\end{corollary}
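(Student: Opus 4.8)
The plan is to realize the suspension of $f$ as a $1$-dimensional foliation of a closed $3$-manifold and to transfer the $R$-closedness, non-minimality, and non-compactness from $f$ to this foliation. Concretely, write $M_f := (M \times [0,1])/\!\sim$ for the mapping torus, where $(x,1) \sim (f(x), 0)$, and let $\Phi = \{\phi_t\}$ be the suspension flow, $\phi_t(x,s) = (x, s+t)$ modulo the identification. Since $M$ is a closed surface, $M_f$ is a closed $3$-manifold and the orbits of $\Phi$ are the leaves of a codimension two foliation $\mathcal{F}$. First I would verify that for each $(x,s) \in M_f$ the flow-orbit closure $\overline{\Phi((x,s))}$ is exactly the mapping torus of $f|_{\overline{O_f(x)}}$ sitting inside $M_f$; this uses that $f$ is pointwise almost periodic (so $\overline{O_f(x)}$ is the minimal set through $x$) and that, as $t \to \pm\infty$, the base components $f^{\lfloor s+t\rfloor}(x)$ accumulate on all of $\overline{O_f(x)}$ while the $S^1$-coordinate $\{s+t\}$ sweeps the whole circle.

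The main step, and the one I expect to be the principal obstacle, is to deduce that $\mathcal{F}$ is $R$-closed from the $R$-closedness of $f$. Let $R_\Phi = \{((x,s),(y,u)) \mid (y,u) \in \overline{\Phi((x,s))}\}$. Given a convergent sequence $((x_n,s_n),(y_n,u_n)) \to ((x,s),(y,u))$ with each pair in $R_\Phi$, the identification in the previous step shows that $(y_n,u_n) \in \overline{\Phi((x_n,s_n))}$ forces $y_n \in \overline{O_f(x_n)}$, i.e. $(x_n,y_n) \in R_f$. Passing to the limit and using the $R$-closedness of $f$ gives $(x,y) \in R_f$, so $y \in \overline{O_f(x)}$; the description of $\overline{\Phi((x,s))}$ as the mapping torus over $\overline{O_f(x)}$ then yields $(y,u) \in \overline{\Phi((x,s))}$, i.e. $((x,s),(y,u)) \in R_\Phi$. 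The care needed here is exactly in controlling the $S^1$-coordinate under the limit, which is why the clean identification of orbit closures as mapping tori over minimal sets is the crux of the argument.

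Next I would argue non-minimality and non-compactness. Since $f$ is not minimal, there is a point $x$ with $\overline{O_f(x)} \subsetneq M$; its suspension is then a proper, closed, $\Phi$-invariant subset of $M_f$, so $\mathcal{F}$ is not minimal. Since $f$ is not periodic, a pointwise-periodic homeomorphism of a connected manifold would itself be periodic (by Montgomery's theorem on pointwise periodic homeomorphisms), so $f$ possesses a point $x$ that is not periodic; the leaf of $\mathcal{F}$ through $(x,0)$ is then non-compact, so $\mathcal{F}$ is not a compact foliation. Alternatively, on $\mathbb{T}^2$ or $\mathbb{S}^2$ one may invoke Theorem \ref{cor:0155} and Corollary \ref{lem:0155}, whose non-periodic minimal sets are infinite, to produce a non-periodic point directly.

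Finally, for the existence clause, the examples are supplied by the irrational rotations on the axisymmetric embeddings of $\mathbb{T}^2$ and $\mathbb{S}^2$ into $\R^3$ described immediately above, which are $R$-closed and non-periodic; on the torus each meridian circle is invariant and on the sphere there are two fixed poles, so in both cases the homeomorphism is not minimal. Suspending these yields the desired codimension two $R$-closed foliations that are neither minimal nor compact, which completes the proof.
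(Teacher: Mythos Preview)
Your proposal is correct and, in fact, considerably more thorough than what the paper itself offers. The paper gives no formal proof of this corollary at all: it simply precedes the statement with the single sentence ``Taking a suspension on $\mathbb{T}^2$ (resp.\ $\mathbb{S}^2$), we obtain the following statement by Theorem~\ref{cor:0155} (resp.\ Corollary~\ref{lem:0155}),'' and leaves the verification that $R$-closedness, non-minimality and non-compactness pass to the suspension entirely to the reader. Your argument supplies exactly those missing steps: the identification of flow-orbit closures with mapping tori over $\overline{O_f(x)}$, the direct check that $R_\Phi$ is closed from the closedness of $R_f$, and the production of a non-periodic point via Montgomery's theorem (with Theorem~\ref{cor:0155} and Corollary~\ref{lem:0155} as an alternative, which is precisely the route the paper gestures at). The existence clause is handled identically in both, using the irrational rotations described just before the corollary. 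So your approach is compatible with the paper's intended one, but you have actually written the proof the paper only hints at; the one point to keep tidy, as you already flag, is choosing representatives in $M_f$ consistently when $s_n$ accumulates at $0$ or $1$, which is harmless since $f$ is a homeomorphism.
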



\end{document}